\newtheorem{thm}{Theorem}[section]
\newtheorem{prop}[thm]{Proposition}
\theoremstyle{definition}
\newtheorem{exa}[thm]{Example}
\newcommand{\R}{\mathbb{R}}
\newcommand{\Z}{\mathbb{Z}}
\newcommand{\CP}{\mathbb{C}P}
\newcommand{\Q}{\mathbb{Q}}
\newcommand{\C}{\mathbb{C}}
\newcommand{\K}{\mathbb{K}}
\newcommand{\T}{\mathbb{T}}
\renewcommand{\epsilon}{\varepsilon}
\title{Euler characteristic and signature of real semi-stable
  degenerations}
\author{Erwan Brugallé}
\address{Erwan Brugall\'e, Universit\'e de Nantes, Laboratoire de
  Math\'ematiques Jean Leray, 2 rue de la Houssini\`ere, F-44322 Nantes Cedex 3,
France}
\email{erwan.brugalle@math.cnrs.fr}
\subjclass[2020]{Primary 14P25, 14T90}
\keywords{Topology of real algebraic varieties, semi-stable
  degenerations, combinatorial patchworking, tropical geometry}
\begin{document}

\begin{abstract}
  We give a motivic proof of the fact that for  non-singular real tropical complete
  intersections, the Euler characteristic of the real part is equal to the
  signature of the complex part. This has originally been
proved by Itenberg in the case of surfaces in $\CP^3$, and has been
successively generalized  by Bertrand, and  by Bihan and Bertrand.
Our proof, different from the previous
approaches, is an
application
of the
motivic nearby fiber of semi-stable degenerations.
In particular it extends the
original result by Itenberg-Bertrand-Bihan to 
real analytic families admitting a $\Q$-non-singular tropical limit.
 \end{abstract} 

\maketitle

Given a real algebraic variety $X$, we denote by $\C X$ the set of its
complex points, and by $\R X$  the set of its
real points.
Recall that the \emph{signature} $\sigma(M)$
of an even dimensional oriented compact smooth manifold $M$ is
defined to be $0$ if $\dim M=4n +2$, and to be
the signature on the (symmetric) intersection form on
$H_{2n}(M;\Q)$ if $\dim M=4n$. The Euler characteristic
with closed support (or Borel-Moore
Euler characteristic) of a
topological space $M$ is denoted by $\chi^c(M)$.

\section{Statement}

\subsection{Context}
A real algebraic variety $X$ is said to \emph{satisfy
$\chi=\sigma$} if
\[
\chi^c(\R X)=\sigma(\C X).
\]
This definition is motivated by the old empirical observation that an
important proportion of 
known real algebraic varieties satisfy $\chi=\sigma$, in
particular among compact non-singular \emph{$M$-varieties}.
By the Smith-Thom inequality
(see for example \cite[Section 3.2]{Mang17}), any real algebraic variety $X$ satisfies
\[
\sum_i \beta_i(\R X)\le \sum_i \beta_i(\C X),
\]
where $\beta_i$ denotes the $i$-th Betti number with
$\Z/2\Z$-coefficients. Furthermore the difference between the two
hand-sides is even, and we say that $X$ is an $(M-i)$-variety if
\[
 \sum_i \beta_i(\C X)- \sum_i \beta_i(\R X)=2i.
\]
If $X$ is a compact and non-singular $M$-variety, Rokhlin
Congruence
\cite[2.7.1]{DK} asserts that
\[
\chi(\R X)=\sigma(\C X) \mod 16,
\]
yet almost all known examples of  non-singular  compact $M$-varieties
satisfy the stronger inequality
$\chi(\R X)=\sigma(\C X)$. If many examples among $M$-surfaces 
not satisfying this  equality are known (see for example
\cite{DK,Mang17}), all non-singular  compact $M$-varieties of 
dimension at least 3 that I know\footnote{I should precise that I know
very few such high-dimensional $M$-varieties.} satisfy $\chi=\sigma$.
A heuristic explanation of
this fact is the following~:
a large class of known real algebraic varieties is
obtained by gluing ``basic real algebraic varieties''
 satisfying $\chi=\sigma$, a
 property that  is preserved under gluing. This latter statement is the
content of Proposition
\ref{prop:gluing} below, which is an elementary observation relying on deep
results about motivic nearby fibers and limit mixed Hodge structures.
By ``basic real algebraic varieties'', we mean for example real
projective spaces, or more generally real toric varieties.

As  an application of this observation, we prove
Theorem \ref{thm:patch} below:  real algebraic
varieties constructed
out of non-singular tropical varieties  satisfy $\chi=\sigma$.
This
generalizes the 
case of complete intersections,  originally proved by Itenberg \cite{Ite97}
for hypersurfaces of dimension 2, by 
Bertrand \cite{Ber2} for hypersurfaces of higher dimensions, and then by
 Bihan
and Bertrand \cite{BerBih07} for any complete intersections.
 Theorem  \ref{thm:patch} is actually about
potentially non-compact  
algebraic varieties, and  we first  briefly recall the extension of the
signature to all complex algebraic varieties. 

\medskip
Given a field $\K$, the \emph{Grothendieck group} $K_0(Var_\K)$ is the abelian group
generated by isomorphism classes $[X]$ of algebraic varieties over $k$
modulo
the \emph{scissor relation}
\[
[X]=[X\setminus Y]+[Y]
\]
for any closed algebraic subvariety $Y$ of $X$.
Considering the product
\[
[X]\times [Y]=[X\times Y]
\]
turns $K_0(Var_\K)$ into a commutative ring with $0=[\emptyset]$ and $1=[pt]$.
It it is not too difficult to show
(see for example \cite[Lecture 1]{Peters10}) that when $\K=\R$ or
$\C$,  Euler
characteristic with closed support provides a ring morphism
\[
\begin{array}{cccc}
  \chi^c:& K_0(Var_\K)& \longrightarrow &\Z
  \\ & X&\longmapsto & \chi^c(\K X)
  \end{array}.
\]
Such a ring (or group) morphism is called a \emph{motivic invariant}.
It is a much less obvious result that the signature of non-singular
projective complex algebraic varieties extends to a motivic invariant
\[
\sigma: K_0(Var_\C)\to \Z.
\]
This is a consequence of the combination of the two following facts:
\begin{itemize}
\item the signature of a non-singular projective complex
  algebraic manifold is the evaluation at $1$
  of its Hirzebruch genus; this is the Hodge index Theorem, see for
  example \cite[Theorem 6.33]{Voi02};
\item  the Hirzebruch genus  extends to a motivic invariant
  $\chi_y: K_0(Var_\C)\to \Z[y]$; see for example \cite[Remark
  5.6]{PetSte08}, since $\chi_y$ is in its turn a specialization of
  the Hodge-Euler polynomial.
\end{itemize}
Using the scissor relation, it easy to check that, as mentioned
above,  basic real
algebraic varieties satisfy $\chi=\sigma$.

\begin{exa}\label{ex:exa}
  One computes easily that $\chi^c(\R)=-1$. On the other hand,
  one has
  \[
  0=\sigma(\CP^1)=\sigma(\C)+\sigma(pt)=\sigma(\C)+1.
  \]
  Hence we deduce that $\chi^c(\R)=\sigma(\C)=-1$, that is to say $\C$
  satisfies $\chi=\sigma$. Since both $\chi^c$ and $\sigma$ are ring
  morphisms, we deduce that $\chi^c(\R^n)=\sigma(\C^n)=(-1)^n$. In
  other words,
  real affine spaces satisfy $\chi=\sigma$.
  By the scissor relation again, this implies that any real algebraic
  variety with has a stratification by real affine spaces satisfies
  $\chi=\sigma$. In particular, all Grassmannians, and more generally
  all  real flag varieties,  satisfy $\chi=\sigma$.
  The case of real toric varieties, i.e. equipped with the standard real structure induced by
the complex conjugation on $\C^*$, can be handled similarly.
  One has
    \[
    \chi^c(\R^*)=-2=-1-1=\sigma(\C^*)-\sigma(pt).
    \]
    Hence   real torus $(\C^*)^n$
  satisfy $\chi=\sigma$, and so does any real toric varieties since
  it admits a stratification by real tori.

\end{exa}

\subsection{Real approximations of $\Q$-non-singular tropical varieties}
Given a fan $\Delta\subset \R^n$, we denote by $Tor_\C(\Delta)$ the
complex toric variety defined by $\Delta$, see \cite{Ful}. 
In this note we only consider
fans $\Delta$ for which
$Tor_\C(\Delta)$ is non-singular, in which  case  we can also consider the
tropical toric variety $Tor_\T(\Delta)$ defined by $\Delta$, see
\cite{BIMS15,MikRauBook}.
In what follows,
we use the notions defined in \cite{IKMZ19}
of $\Q$-non-singular tropical varieties of 
$Tor_\T(\Delta)$, and of tropical limits
of  non-singular analytic families 
$X\subset \CP^N\times D^*$
of  algebraic subvarieties of $\CP^N$  over
the punctured unit disk $D^*\subset \C$.
For such a family, we denote by $X_t$ the member of the family
corresponding to $t\in D^*$.

\begin{thm}\label{thm:patch}
Let $\Delta$ be a fan defining a non-singular projective toric variety
$Tor_\C(\Delta)$. Suppose that
$X\subset Tor_\C(\Delta)\times D^*$
is a  non-singular real analytic family 
of  algebraic subvarieties of $Tor_\C(\Delta)$, admitting a $\Q$-non-singular
tropical limit in $Tor_\T(\Delta)$.
Then for any subfan $\delta$ of $\Delta$ and
for any $t_0\in \R D^*$ small enough, the real analytic variety $X_{t_0}\cap Tor_\C(\delta)$ 
satisfies $\chi=\sigma$.
\end{thm}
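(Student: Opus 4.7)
The plan is to combine the announced Proposition \ref{prop:gluing} on motivic nearby fibers with the standard fact that a $\Q$-non-singular tropical limit encodes a real semi-stable degeneration whose special fiber is stratified by (open subsets of) real toric varieties. Since such toric strata satisfy $\chi=\sigma$ by Example \ref{ex:exa}, and since the gluing proposition propagates $\chi=\sigma$ from the strata of the central fiber to the generic real fiber, the theorem follows.

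Concretely, I would first use the $\Q$-non-singular tropical limit to produce a real semi-stable model of the family. Possibly after a ramified base change $t\mapsto t^d$ and a toric modification of $Tor_\C(\Delta)$ dictated by the combinatorics of the tropical limit, the analytic family $X\to D^*$ extends to a proper flat family $\bar X\to D$ whose special fiber $\bar X_0$ is a simple normal crossings divisor defined over $\R$, whose dual intersection complex matches the given tropical variety, and whose strata $\bar X_\sigma$ (indexed by faces $\sigma$ of the tropical limit) are smooth real toric varieties, equipped with the standard real structure induced by complex conjugation. This is exactly the content of the $\Q$-non-singularity hypothesis of \cite{IKMZ19}: it provides integral local toric charts around every point of the tropical variety, which is what one needs to upgrade the degeneration to a \emph{real} semi-stable one with real toric strata.

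Next, I would invoke Proposition \ref{prop:gluing} to express the class $[X_{t_0}]\in K_0(Var_\R)$, for $t_0\in \R D^*$ small enough, as a $\Z$-linear combination of classes of (locally closed) strata of $\bar X_0$. Because $\chi^c$ and $\sigma$ are both ring morphisms, the property $\chi=\sigma$ is stable under $\Z$-linear combinations and products in $K_0$; since every stratum $\bar X_\sigma$ is a real toric variety and thus satisfies $\chi=\sigma$ by Example \ref{ex:exa}, the same holds for $[X_{t_0}]$. The passage to a subfan $\delta\subset \Delta$ is essentially free: intersecting the degeneration $\bar X$ with the closed real toric subvariety $Tor_\C(\delta)\times D$ yields a real semi-stable degeneration of $X_{t_0}\cap Tor_\C(\delta)$ whose special-fiber strata are again real toric, so the same argument applies verbatim.

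The main obstacle I anticipate lies in the first step: one must check that the tropical-to-degeneration dictionary is compatible with the real structure, i.e.\ that the real semi-stable model can be chosen defined over $\R$ with the real structure on each stratum being the standard toric one, and that the possibly necessary base change $t\mapsto t^d$ does not introduce spurious non-real components. Once this is set up, identifying each stratum as a real toric variety is formal, and the whole argument reduces to an application of Proposition \ref{prop:gluing} and Example \ref{ex:exa}.
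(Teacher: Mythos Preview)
Your overall strategy coincides with the paper's: produce a totally real semi-stable degeneration from the tropical limit, apply Proposition \ref{prop:gluing}, and verify that every open stratum $E_I^\circ$ of the central fiber satisfies $\chi=\sigma$. Your treatment of the subfan $\delta$ is also essentially equivalent; the paper instead reduces first to $\delta=\{0\}$ via the torus-orbit decomposition and the motivicity of $\chi^c$ and $\sigma$, which is slightly cleaner, but either route works.

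The genuine gap is your identification of the strata. You assert that the strata of the central fiber are smooth real toric varieties and then invoke Example \ref{ex:exa}. This is \emph{not} what the $\Q$-non-singularity hypothesis of \cite{IKMZ19} gives. A $\Q$-non-singular tropical variety is locally modeled on the Bergman fan of a matroid, not on a unimodular cone; accordingly, by \cite[Proposition 51]{IKMZ19} the open strata $E_I^\circ$ of the semi-stable central fiber are complements of real hyperplane arrangements in some $\CP^m$ (with the standard conjugation). Such a complement is a toric variety only in degenerate cases; in general it admits no torus action with dense orbit and cannot be stratified by real tori, so Example \ref{ex:exa} does not apply directly. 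The paper fills this gap with a short double induction on $m$ and on the number $k$ of hyperplanes: removing one hyperplane writes $[\mathcal A]=[\mathcal A']-[\mathcal A'']$ in $K_0$, where $\mathcal A'\subset\CP^m$ and $\mathcal A''\subset\CP^{m-1}$ are complements of $k-1$ real hyperplanes, and the base case is $\CP^m$ itself. Your argument as written is valid only in the situations (e.g.\ primitive tropical hypersurfaces) where the local tropical models happen to be ordinary unimodular fans; for the general $\Q$-non-singular case you still need this extra inductive step.
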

As mentioned above the case of compact hypersurfaces has been earlier proved
by Itenberg \cite{Ite97} and Bertrand \cite{Ber2}.
This has been generalized  to possibly non-compact complete
intersections by Bertand and Bihan \cite{BerBih07}, where they used 
motivic aspects of $\chi^c$ and $\sigma$ to reduce to the case of
hypersurfaces. The strategy in each of
these three papers is then to separately
compute
$\chi^c$ and $\sigma$, and to check that both numbers coincide.
Alternative proofs using tropical homology
 were proposed in the case of  complete intersections by Arnal
\cite{Arn17} (still by equating two separate computations),
and by Renaudineau and Shaw \cite{RenSha18}. In this latter work, the equality
$\chi^c(\R X_t)=\sigma(\C X_t)$ is a consequence of a much
stronger result: it follows from the existence of a spectral
sequence starting from the tropical homology of the tropical limit of
$X$
and converging
to the homology of $\R X$.
Results from \cite{RenSha18}
have recently been generalized to any 
real analytic family admitting a $\Q$-non-singular tropical limit by
Rau, Renaudineau and Shaw \cite{RauRenSha22}.

Hence in a sense all previous  proofs  of Theorem
\ref{thm:patch} are based on  separate computations of both 
$\chi^c$ and $\sigma$. Our proof of
Theorem \ref{thm:patch} uses a different strategy~: both quantities
satisfy the same  gluing relations under
totally real semi-stable degenerations by Proposition \ref{prop:gluing},
while the tropical non-singularity assumption ensures
that all pieces involved in the gluing satisfy $\chi=\sigma$.

\medskip
Note  that in the case of hypersurfaces,
Itenberg, Bertrand, and 
Bertand and Bihan do not work in the tropical geometry framework,
but in the dual setup  of unimodular subdivisions of polytopes.
It is interesting that their proof also applies to non-regular (or non-convex)
subdivisions, that is to say  \emph{real combinatorial hypersurfaces} (see
\cite{IS2} for a definition) also
satisfy $\chi=\sigma$. 
Our proof of Theorem \ref{thm:patch}, as well as the  proofs by
Arnal, Renaudineau and Shaw, and Rau Renaudineau and Shaw do not seem to extend to  real
combinatorial hypersurfaces.

\subsection*{Acknowledgment}
This  work is
partially supported by the grant TROPICOUNT of Région Pays de la
Loire, and the ANR project ENUMGEOM NR-18-CE40-0009-02.
I am grateful to Ilia Itenberg for drawing my attention to the
classifications of real cubic hypersurfaces of dimension 3 and 4,
which helped me in 
improving the last remark of this note.

\section{Proof}

\subsection{Real semi-stable degenerations}

Let $f:X\to D$ be a proper analytic map from a non-singular complex
algebraic manifold
$X$ to the unit disk $D\subset \C$ such that
\begin{itemize}
\item $X_t$ is a non-singular algebraic manifold for all $t\in D^*$;
  \item $X_0=f^{-1}(0)$ is a reduced algebraic variety with non-singular
    components crossing normally.
\end{itemize}
Such a map is called a \emph{semi-stable degeneration} (of 
$X_t$ for $t\ne 0$).
Denoting by $(E_i)_{i\in J}$ the irreducible components of $X_0$, we
define for $I\subset J$
\[
E_I^\circ =\bigcap_{i\in I}E_i\setminus \bigcup_{j\notin J}E_j.
\]

Suppose now that $X$ is real, and that $f$ is real when $D$ is equipped with
the standard complex conjugation. we say that $f$ is
\emph{totally real} if each  irreducible component of $X_0$ is
real.

\begin{prop}\label{prop:gluing}
  Let $f:X\to D$ be a totally real semi-stable degeneration. Then for
  any $t\in\R D^*$ small enough
  \[
  \sigma(\C X_t)=\sum_{\emptyset \ne I\subset J}2^{|I|-1}\sigma(\C E^o_I)
  \qquad \mbox{and}\qquad
   \chi^{c}(\R X_t)=\sum_{\emptyset \ne I\subset J}2^{|I|-1}\chi^{c}(\R E^o_I).
   \]
   In particular $X_t$ satisfies $\chi=\sigma$ as soon as all
   $E^o_I$'s satisfy $\chi=\sigma$.
\end{prop}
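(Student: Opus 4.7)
The strategy is to apply the Denef--Loeser motivic nearby fiber formula for semi-stable degenerations. For $f:X\to D$ as in the statement, this formula expresses the class of the motivic nearby fiber as
\[
[\psi_f]=\sum_{\emptyset\ne I\subset J}(1-\mathbb{L})^{|I|-1}[E^\circ_I]
\]
in $K_0(Var_\C)$, where $\mathbb{L}=[\mathbb{A}^1]$. The totally real hypothesis ensures that each $E_i$, and hence each $E^\circ_I$, is a real variety, so the same identity can be lifted to an identity in $K_0(Var_\R)$. The second ingredient needed is the matching between the motivic nearby fiber and the topological nearby fiber under our invariants: namely $\sigma(\C X_t)=\sigma([\psi_f])$ for $t\in D^*$ close to $0$ (a consequence of the Hodge index theorem applied to the limit mixed Hodge structure, using that the Hirzebruch genus $\chi_y$ is motivic and $\sigma=\chi_y|_{y=1}$ on smooth projective varieties), and $\chi^c(\R X_t)=\chi^c([\psi_f])$ for $t\in \R D^*$ close to $0$, which is the real analog.

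Granted these two matching identities, the proof reduces to a direct evaluation. By Example \ref{ex:exa} we have $\sigma(\C)=\chi^c(\R)=-1$, so $\sigma(1-\mathbb{L})=\chi^c(1-\mathbb{L})=2$, and hence $(1-\mathbb{L})^{|I|-1}$ evaluates to $2^{|I|-1}$ under either motivic invariant. Applying $\sigma$ to the Denef--Loeser formula in $K_0(Var_\C)$ yields the signature identity, and applying $\chi^c$ to its real lift in $K_0(Var_\R)$ yields the Euler characteristic identity. The final assertion is then immediate: if every $E^\circ_I$ satisfies $\chi=\sigma$, the two sums agree term by term.

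The main obstacle is justifying the real matching identity $\chi^c(\R X_t)=\chi^c([\psi_f])$; the complex matching is standard in the motivic nearby fiber and limit mixed Hodge theory literature. A direct topological route is available in the real case: near a stratum $E^\circ_I$ with $|I|=k$, the map $f$ is locally modeled by $z_1\cdots z_k=t$, whose real fiber over small real $t$ consists of exactly $2^{k-1}$ connected components, each diffeomorphic to an open subset of $\R^{k-1}$. Assembling these local contributions via a Clemens-type collapse of $X_t$ onto a subspace adapted to the stratification $\{E^\circ_I\}$, and invoking additivity of $\chi^c$, recovers the total $\chi^c(\R X_t)$ in the form predicted by the evaluated Denef--Loeser formula. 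Alternatively, one may cite a real motivic nearby fiber framework in the spirit of McCrory--Parusi\'nski's theory of additive invariants of real algebraic varieties, in which $\chi^c$ is a ring morphism $K_0(Var_\R)\to\Z$ and the real nearby class is constructed so that this matching identity holds by construction.
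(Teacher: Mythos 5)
Your proposal is correct and follows essentially the same route as the paper: the signature identity via the Denef--Loeser motivic nearby fiber together with the specialization $\sigma=\chi_y|_{y=1}$, and the Euler-characteristic identity via the local model $x_1\cdots x_{|I|}=t$, whose real solution set splits into $2^{|I|-1}$ pieces over each stratum $\R E^\circ_I$. The paper never invokes a real motivic nearby fiber or a class in $K_0(Var_\R)$ --- it establishes the real statement directly by the topological decomposition of $\R X_t$, which is precisely the ``direct topological route'' you offer as the resolution of what you call the main obstacle.
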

\begin{proof}
The statement about the signature follows from deep results on motivic
nearby fibers and limit mixed Hodge structures.
We refer to \cite[Section 1 and 3]{KatSta16} for a concise exposition
of what is needed here, and to \cite[Section 3]{DenLoe01},
\cite[Section 2]{Bit05}, and  \cite[Section 11]{PetSte08} for more details.
Recall first that the
 Hirzebruch genus
 $\chi_y(Y)$ of a projective non-singular complex algebraic variety
 is the polynomial in $y$ defined by
 \[
 \chi_y(Y)=\sum_{p\ge 0}\left(\sum_{q\ge 0}(-1)^qh^{p,q}(Y) \right) y^p,
 \]
 and that it satisfies $\chi_1(Y)=\sigma(Y)$ by the Hodge index Theorem.
Hirzebruch genus turns out to be a motivic invariant, that is to
say it extends to a ring morphism
 $\chi_y:K_0(Var_\C)\to \Z[y]$. In particular $\sigma(Y)=\chi_1(Y)$
for any complex algebraic variety $Y$.
The motivic nearby fiber of $f$, introduced by Denef and Loeser, is defined by
\[
\psi_f=\sum_{\emptyset \ne I\subset J} [E^o_I](1- [\C])^{|I|-1} \in K_0(Var_\C),
\]
and satisfies the great property  $\chi_y(\psi_f)=\chi_y(X_t)$ for
$t\ne 0$.
So one has
\[
\chi_y(X_t)=\sum_{\emptyset \ne I\subset J} \chi_y(E^o_I)
(2-\chi_y(\CP^1))^{|I|-1}
=\sum_{\emptyset \ne I\subset J} (1+y)^{|I|-1}\chi_y(E^o_I).
\]
The first statement of the proposition is obtained by evaluating this
identity at $y=1$.

  The statement about Euler characteristic follows from the
  observation that $\R X_t$ is the disjoint union of
  coverings  of $\R E^o_I$ of degree
  $2^{|I|-1}$, with $I$ ranging over
  all possible sets $\emptyset \ne I\subset J$.
  Indeed, by assumption  one can 
  locally express $X$ in coordinates  at a point of $\R E^o_I$ as the
 solutions of the equation
  \[
  x_1x_2\cdots x_{|I|}=\alpha \subset \R^n\times\R,
  \]
  where $\alpha$ is the deformation parameter, with $\alpha=0$
  corresponding to the central fiber $X_0$.
  Given a non zero $\alpha$, the corresponding smooth
  fiber of $f$ is locally given in
  $\R^n=\R^{|I|}\times\R^{n-|I|}$
  by the solutions of the above equation.
  This set is homeomorphic to the disjoint union of  $2^{|I|-1}$
  copies of
  $\R_{\ge 0}^{|I|}\times\R^{n-|I|}$, and the result follows.
\end{proof}

\subsection{Proof of Theorem \ref{thm:patch}}\label{sec:patch}
Our proof   combines 
Proposition \ref{prop:gluing} and \cite[Section 4]{IKMZ19} to
reduce to  an
elementary computation for complement of real hyperplane arrangements.

Recall that $\Delta\subset \R^n$ is a fan defining 
a non-singular projective toric variety
$Tor_\C(\Delta)$, and that
$f:X\to  D^*$
is a  non-singular real analytic family 
of  algebraic subvarieties of $Tor_\C(\Delta)$ admitting a $\Q$-non-singular
tropical limit $V$ in $Tor_\T(\Delta)$.
Given torus orbits $\mathcal O_\C$
and $\mathcal O_\T$
of
$Tor_\C(\Delta)$ and $Tor_\T(\Delta)$ 
respectively corresponding to the same  cone of $\Delta$,
the  intersection of $V\cap \mathcal O_\T$ is the tropical limit of
$X\cap (\mathcal O_\C\times D^*)$.
Since both $\sigma$ and $\chi^c$ are motivic invariants,
 it is enough to prove Theorem \ref{thm:patch} in the case when
$\delta=\{0\}$, that is to say when $Tor_\C(\delta)=(\C^*)^n$.
In what follows we define $X^o=X\cap ((\C^*)^n \times D^*)$ and
$V^o=V\cap\R^n$.

By \cite[Proposition 51]{IKMZ19} (see also
\cite[Proposition 2.3]{HelKat12} and \cite[Lemma 7.9]{Katz2} in a
slightly different realm), after shrinking $D$ if necessary, and after
a base change $t\mapsto \pm t^d$, one can extend
the real family $X^o$ over $0$ to
a real semi-stable degeneration  $f:\overline X^o\to D$ such that
any irreducible component of $E_I^\circ$ of the central fiber
$\overline X^o_0$ is isomorphic to
the complement of a hyperplane arrangement in some
    $\CP^m$.
When $d$ is even, the sign in the base change is chosen so that
the real fiber $X_{t_0}$ lifts to a real fiber (i.e. it is the sign of
$t_0$).
Since the complex conjugation on $(\C^*)^n$ induces, via the 
the tropicalization procedure, the identity map on $\R^n$, 
 the semi-stable degeneration  $f:\overline X^o\to D$ is totally
real.
In particular, each irreducible component of $E_I^\circ$
is the complement of a real hyperplane arrangement in some
    $\CP^m$ equipped with the standard complex conjugation.

Hence, thanks to Proposition \ref{prop:gluing}, the proof of Theorem
\ref{thm:patch} reduces to verify that these latter complements  satisfy $\chi=\sigma$.
This is easily done by a double induction on $m$ and the number
$k$ of
hyperplanes in the arrangement.
\begin{enumerate}
\item The case $m=0$ holds trivially.
  \item Assume that this is true for $m-1$ and any $k$, and let us
    prove by
    induction on $k$ that it is also true for $m$ and any $k$.
    \begin{enumerate}
    \item One checks easily that the case $k=0$ holds, see  Example \ref{ex:exa}.
      \item By definition, the complement $\mathcal A$ of $k$ real hyperplanes in
        $\CP^m$ is obtained by removing  the complement $\mathcal A''$
        of $k-1$
        real hyperplanes in $\CP^{m-1}$  to  the
        complement $\mathcal A'$ of $k-1$ real hyperplanes in
        $\CP^{m}$.
        By induction and the scissor relation, we have
        \[
        \chi^c(\R \mathcal A)=\chi^c(\R \mathcal A')-\chi^c(\R \mathcal A'')
        = \sigma^c(\C \mathcal A')-\sigma^c(\C \mathcal A'')= \sigma^c(\C \mathcal A).
        \]
    \end{enumerate}
\end{enumerate}
Now the proof of Theorem
\ref{thm:patch} is complete. \hfill$\square$

\section{Further comments}
We end this note with a couple of remarks.
First,
dropping compactness or  $M$-condition one constructs easily
real algebraic varieties which do not satisfy
$\chi=\sigma$. For example $X=\C^*$ equipped with the real structure
$\tau(z)=\frac{1}{z}$ is an $M$-curve and satisfies
\[
\chi^c(\R X)=\chi(S^1)=0\ne -2= \sigma(\C^*),
\]
and a quadric ellipsoid $X$ in $\CP^3$ satisfies
\[
\chi(\R X)=2\ne 0= \sigma(\C X).
\]
More generally, Gudkov-Kharlamov-Krakhnov Congruence \cite[2.7.1]{DK} implies that a
non-singular compact
$(M-1)$-variety\footnote{Unfortunately, I do not know much more
  $(M-1)$ than $M$-varieties.}
cannot 
satisfy
$\chi=\sigma$.

Lastly,  Viro Conjecture for $M$-surfaces states that
 such a projective and simply connected $M$-surface $X$ satisfies
\[
\beta_1(\R X)\le h^{1,1}(\C X),
\]
which is equivalent in this case to the inequality
\[
\chi^c(\R X)\ge \sigma(\C X).
\]
I do not know any $M$-variety\footnote{Go back to footnote 1.}, projective or not, singular or not,
which does
not satisfy
\begin{align*}
   \chi^c(\R X)\le \sigma(\C X) &\mbox{ if }\dim X = 0,3\mod 4
 \\ \chi^c(\R X)\ge \sigma(\C X) &\mbox{ if } \dim X = 1,2\mod 4.
\end{align*}

\bibliographystyle{alpha}
\bibliography{../Biblio}
\end{document}